\newcommand{\excise}[1]{}
\newtheorem{thm}{Theorem}
\newtheorem{question}[thm]{Question}
\theoremstyle{definition}
\newtheorem{remark}[thm]{Remark}
\newtheorem{ans}[thm]{Answer}
\newcommand{\ring}[1]{\ensuremath{\mathbb{#1}}}
\newcommand\NN{\ring{N}}
\newcommand\ZZ{\ring{Z}}
\newcommand\bb{{\mathbf b}}
\newcommand\kk{\Bbbk}
\newcommand\xx{{\mathbf x}}
\newcommand\ttt{\mathbf{t}}
\renewcommand\aa{{\mathbf a}}
\begin{document}

\title{Finding all monomials in a polynomial ideal}
\author{Ezra Miller}
\address{Mathematics Department\\Duke University\\Durham, NC 27708}
\urladdr{\url{http://math.duke.edu/people/ezra-miller}}

\makeatletter
  \@namedef{subjclassname@2010}{\textup{2010} Mathematics Subject Classification}
\makeatother

\date{31 May 2016}

\begin{abstract}
Given a $d \times n$ integer matrix~$A$, the main result is an
elementary, simple-to-state algorithm that finds the largest
$A$-graded ideal contained in any ideal~$I$ in a polynomial ring
$\kk[\xx]$ in~$n$ variables.  The special case where $A$ is an
identity matrix yields that $(\ttt.I) \cap \kk[\xx]$ is the largest
monomial ideal in~$I$, where the generators of $\ttt.I$ are those
of~$I$ but with each variable $x_i$ replaced by $t_ix_i$ for an
invertible variable~$t_i$.
\end{abstract}
\maketitle

\noindent
It is easy to tell whether an ideal $I$ in a polynomial ring $\kk[\xx]
= \kk[x_1,\ldots,x_n]$ contains at least one monomial: it does so if
and only if the saturation $(I:(x_1\cdots x_n)^\infty)$ is the unit
ideal.  Being more precise about the monomials in~$I$ makes the
problem a little harder.  Here are three equivalent ways to formulate
it.

\begin{question}\label{q:monomials}
Fix an ideal $I \subseteq \kk[\xx]$.
\begin{enumerate}
\item%
What is the set of monomials in~$I$?
\item%
What is the largest $\NN^n$-graded ideal contained in~$I$?
\item%
What is the smallest $(\kk^*)^n$-scheme containing the zero scheme
of~$I$?
\end{enumerate}
\end{question}

\begin{ans}\label{a:monomial}
Let $\ttt = t_1,\ldots,t_n$ be a new set of variables.  Inside of the
Laurent polynomial ring $\kk[\xx][\ttt^{\pm1}]$, let $\ttt.I$ denote
the ideal whose generators are those of~$I$ where each variable $x_i$
is replaced by $t_i x_i$.  The biggest monomial ideal contained in~$I$
is $(\ttt.I) \cap \kk[\xx]$.
\end{ans}

This answer appears, with non-invertible $\ttt$-variables, as
Algorithm~4.4.2 in \cite{sst00}.  An elementary proof is given there.
It is obvious, for instance, that every monomial in~$I$ lies in
$(\ttt.I) \cap \kk[\xx]$, since the $\ttt$ variables are units; and
intuitively, there is no way to clear all of the $\ttt$ variables
simultaneously from all of the monomials in a given polynomial with
more than one term.  That said, viewing Question~\ref{q:monomials}
as a special case of a more general problem from multigraded algebra
lends insight.  For notation, if $A \in \ZZ^{d \times n}$ is a $d
\times n$ matrix of integers, to say that the polynomial ring
$\kk[\xx]$ is \emph{$A$-graded} means that each monomial $\xx^\bb \in
\kk[\xx]$ is assigned the \emph{$A$-degree} $\deg(\xx^\bb) = A\bb$,
the linear combination of the $n$ columns of the matrix~$A$ with
coefficients $\bb = b_1,\ldots,b_n$.  An ideal $I$ is
\emph{$A$-graded} if it is generated by polynomials whose terms all
have the same~$A$-degree.

\begin{thm}\label{t:dxn}
Fix an ideal $I \subseteq \kk[\xx]$ and a $d \times n$ matrix~$A$ with
column vectors\/ $\aa_1,\ldots,\aa_n$.  Let $\ttt = t_1,\ldots,t_n$ be
a new set of variables.  Denote by $\ttt.I \subseteq
\kk[\xx][\ttt^{\pm1}]$ the ideal whose generators are those of~$I$
with each variable $x_i$ replaced by $\ttt^{\aa_i} x_i$.  The largest
$A$-graded ideal contained in~$I$ equals the intersection $(\ttt.I)
\cap \kk[\xx]$.
\end{thm}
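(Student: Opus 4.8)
The plan is to set $R:=\kk[\xx][\ttt^{\pm1}]$ and to realize $\ttt.I$ as the image, under a natural automorphism of~$R$, of the extended ideal $IR$. Let $\phi$ be the $\kk[\ttt^{\pm1}]$-algebra automorphism of~$R$ determined by $\phi(x_i)=\ttt^{\aa_i}x_i$; its inverse is $\phi^{-1}(x_i)=\ttt^{-\aa_i}x_i$, which makes sense because the~$t_j$ are units in~$R$. Since $\phi$ is a ring isomorphism fixing $\kk[\ttt^{\pm1}]$ pointwise, it carries $IR$ onto the ideal generated by $\{\phi(g):g\in I\}$, which is exactly $\ttt.I$; hence $\ttt.I=\phi(IR)$ (so, incidentally, $\ttt.I$ does not depend on the chosen generating set of~$I$), and for $f\in R$ we have $f\in\ttt.I$ if and only if $\phi^{-1}(f)\in IR$.

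The next step is the one computation that drives the proof. Directly from the definition of $A$-degree, $\phi^{-1}(\xx^\bb)=\ttt^{-A\bb}\xx^\bb$, so if $f\in\kk[\xx]$ is written $f=\sum_\alpha f_\alpha$ with $f_\alpha$ its $A$-homogeneous component in $A$-degree~$\alpha$ --- a finite sum over the finitely many $A$-degrees occurring in~$f$ --- then
\[
  \phi^{-1}(f)=\sum_\alpha \ttt^{-\alpha}f_\alpha .
\]
I would then use that $R$ is a free $\kk[\xx]$-module on the Laurent monomials in~$\ttt$, whence $IR=\bigoplus_\gamma I\,\ttt^{\gamma}$; concretely, an element $\sum_\gamma g_\gamma\,\ttt^{\gamma}$ of~$R$ with $g_\gamma\in\kk[\xx]$ lies in $IR$ precisely when every $g_\gamma\in I$. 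Because the monomials $\ttt^{-\alpha}$ in the display are pairwise distinct, matching coefficients yields the clean criterion
\[
  f\in(\ttt.I)\cap\kk[\xx]\quad\Longleftrightarrow\quad f_\alpha\in I\text{ for every }A\text{-degree }\alpha .
\]

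From this criterion the theorem drops out in three short steps. (i)~If $f\in(\ttt.I)\cap\kk[\xx]$ then $f=\sum_\alpha f_\alpha$ with every $f_\alpha\in I$, so $(\ttt.I)\cap\kk[\xx]\subseteq I$. (ii)~Applying the criterion first to~$f$ and then to each single-$A$-degree polynomial $f_\alpha$ shows that $f_\alpha\in(\ttt.I)\cap\kk[\xx]$ whenever $f$ is; thus $(\ttt.I)\cap\kk[\xx]$ --- which is an ideal, being the intersection of the ideal $\ttt.I$ with the subring $\kk[\xx]$ --- is closed under passing to $A$-homogeneous components, hence is $A$-graded. (iii)~If $K\subseteq I$ is any $A$-graded ideal and $g\in K$, then each $A$-homogeneous component $g_\alpha$ lies in $K\subseteq I$, so the criterion gives $g\in(\ttt.I)\cap\kk[\xx]$; therefore $K\subseteq(\ttt.I)\cap\kk[\xx]$. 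Steps (i)--(iii) together say that $(\ttt.I)\cap\kk[\xx]$ is an $A$-graded ideal contained in~$I$ and containing every $A$-graded ideal contained in~$I$; that is, it is the largest such ideal.

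I expect the only real obstacle to be conceptual rather than technical: seeing that one should not wrestle with $\ttt.I$ directly but instead pull~$f$ back along~$\phi^{-1}$ and reinterpret its $A$-homogeneous components as the coordinates of $\phi^{-1}(f)$ in the free $\kk[\xx]$-basis $\{\ttt^{\gamma}\}$ of~$R$. Once that is in hand everything is routine; in particular no hypothesis on~$\kk$ and no Noetherian assumption enters. The points I would be mildly careful about are that $\ttt.I$ is generator-independent (automatic once it is identified with $\phi(IR)$), that the $A$-homogeneous decomposition of a polynomial is finite so the basis-coordinate argument applies verbatim, and that being closed under $A$-homogeneous components is indeed equivalent to being generated by $A$-homogeneous polynomials, the definition of $A$-graded used above.
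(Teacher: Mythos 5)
Your proof is correct, but it follows a genuinely different route from the paper's. The paper argues geometrically: it forms $X = T \times \kk^n$, where $T \cong (\kk^*)^d$ is the torus whose action corresponds to the $A$-grading, builds the subbundle $Y \subseteq X$ whose fiber over $\tau \in T$ is the translate $\tau^{-1}.Z(I)$, observes that $Y$ is cut out by $\ttt.I$ and that its projection to $\kk^n$ --- the union of all $T$-translates of $Z(I)$, hence the smallest $T$-stable scheme containing $Z(I)$ --- is cut out by the elimination ideal $(\ttt.I) \cap \kk[\xx]$; the dictionary between $T$-subschemes and $A$-graded ideals then finishes the argument. You replace all of this with the automorphism $\phi$ of $\kk[\xx][\ttt^{\pm1}]$, the identification $\ttt.I = \phi(IR)$, and coefficient-matching in the free $\kk[\xx]$-basis $\{\ttt^\gamma\}$, arriving at the criterion that $f \in (\ttt.I) \cap \kk[\xx]$ if and only if every $A$-homogeneous component of $f$ lies in $I$. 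What your version buys: it is entirely self-contained, avoids scheme-theoretic images and the torus-action/graded-ideal correspondence (which the paper itself notes is hard to find referenced), and works verbatim over an arbitrary commutative coefficient ring, with no hypothesis on $\kk$. What the paper's version buys: brevity, and the fact that it simultaneously establishes the geometric reformulation in Question~\ref{q:monomials}(3) and Remark~\ref{r:A-graded}, namely that the zero scheme of $(\ttt.I) \cap \kk[\xx]$ is the smallest $T$-scheme containing $Z(I)$. The one step you assert rather than prove --- that an ideal generated by $A$-homogeneous elements is closed under taking $A$-homogeneous components, which you need in step (iii) --- is a one-line computation (expand $g = \sum_j c_j h_j$ with $h_j$ homogeneous and extract the degree-$\alpha$ part), so it is fine to leave as stated.
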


After the first version of this note was posted, the authors of
\cite{kreuzer-robbiano05} pointed out that the statement of
Theorem~\ref{t:dxn} is essentially Tutorial~50(a) in their book, an
exercise with a suggested proof that is different from the one here.

\begin{remark}\label{r:A-graded}
Details on $A$-graded algebra in general can be found in
\cite[Chapter~8]{cca}.  The $A$-grading on $\kk[\xx]$ corresponds
uniquely to the action of a torus $T \cong (\kk^*)^d$ on~$\kk^n$.
(References for this are hard to locate.  An exposition appears in
Appen\-dix~A.1 of the first arXiv version of~\cite{grobGeom}, at
{\small\texttt{http://arxiv.org/abs/math/0110058v1}}.)  Under this
correspondence, $A$-graded ideals correspond to subschemes of~$\kk^n$
that carry $T$-actions.  Therefore the zero scheme of the ideal
$(\ttt.I) \cap \kk[\xx]$ in Theorem~\ref{t:dxn} is the smallest
$T$-scheme containing the zero scheme $Z(I)$.
\end{remark}

\begin{proof}[Proof of Theorem~\ref{t:dxn}]
Let $X = T \times \kk^n$.  Create a subbundle $Y \subseteq X$ over~$T$
whose fiber over $\tau \in T$ is the translate $\tau^{-1}.Z(I)$ of the
zero-scheme~$Z(I)$ by~$\tau^{-1}$.
The image of the projection of~$Y$ to~$\kk^n$ is the minimal
$T$-stable scheme containing~$Z(I)$ by construction: it is the union
of all $T$-translates of~$Z(I)$.  Therefore the vanishing ideal of the
image of the projection is the maximal $A$-graded subideal of~$I$.
The scheme $Y$ is expressed, in coordinates, as the zero scheme
of~$\ttt.I$, and the image of its projection to~$\kk^n$ is the zero
scheme of $(\ttt.I) \cap \kk[\xx]$.
\end{proof}

\begin{remark}\label{r:binomial}
In contrast to the monomial situation, the binomial analogue of
Question~\ref{q:monomials}.1, which begins with, ``Is there a binomial
in~$I$?'', appears to be much harder than the monomial question, as
observed by Jensen, Kahle, and Katth\"an \cite{jensen-kahle-katthan}.  They
note, for example, that
for each $d$ there is an ideal in $\kk[x,y]$ that contains no
binomials of degree less than~$d$ but nonetheless has a quadratic
Gr\"obner basis and contains a binomial of degree~$d$.
\end{remark}



\begin{thebibliography}{JKK16}

\bibitem[JKK16]{jensen-kahle-katthan}
Anders Jensen, Thomas Kahle, and Lukas Katth\"an, \emph{Finding
  binomials in polynomial ideals}, preprint, 2016.

\bibitem[KM05]{grobGeom}
Allen Knutson and Ezra Miller, \emph{Gr\"obner geometry of Schubert
  polynomials}, Annals of Mathematics \textbf{161} (2005), 1245--1318.
  \textsf{arXiv:math.AG/0110058}

\bibitem[KR05]{kreuzer-robbiano05}
Martin Kreuzer and Lorenzo Robbiano, \emph{Computational commutative
  algebra~2}, Springer-Verlag, Berlin, 2005.

\bibitem[MS05]{cca}
Ezra Miller and Bernd Sturmfels, \emph{Combinatorial commutative
  algebra}, Graduate Texts in Mathematics, vol. 227, Springer-Verlag,
  New York, 2005.

\bibitem[SST00]{sst00}
Mutsumi Saito, Bernd Sturmfels, and Nobuki Takayama, \emph{Gr\"obner
  deformations of hypergeometric differential equations}, Algorithms
  and Computation in Mathematics, vol.\,6, Springer-Verlag, Berlin,
  2000.

\end{thebibliography}
\end{document}